\newtheorem{thm}{Theorem}[section]
\newtheorem{lem}[thm]{Lemma}
\newtheorem{prop}[thm]{Proposition}
\newtheorem{assu-nota}[thm]{Assumption--Notation}
\newtheorem{claim}[thm]{Claim}
\theoremstyle{definition}
\newtheorem{rem}[thm]{Remark}
\newtheorem{ex}[thm]{Example}
\newcommand{\inv}{^{-1}}
\newcommand{\pp}{\mathbb P}
\DeclareMathOperator{\Pic}{Pic}
\DeclareMathOperator{\pic}{Pic}
\DeclareMathOperator{\Alb}{Alb}
\DeclareMathOperator{\albdim}{Albdim}
\DeclareMathOperator{\Imm}{Im}
\def\Kcal{{\mathcal K}}
\def\Mcal{{\mathcal M}}
\def\Ocal{{\mathcal O}}
\def\PP{{\mathbb P}}
\newcommand{\fie}{\varphi}
\numberwithin{equation}{section}
\title[The classification of minimal irregular surfaces\dots]{The classification of minimal irregular surfaces of general type with $K^2= 2p_g$}
\author{Ciro Ciliberto}
\address{Dipartimento di Matematica, II Universit\`a di Roma,
Italy}
\email{cilibert@axp.mat.uniroma2.it}
\author{Margarida Mendes Lopes}
\address{Departamento de Matem\'atica, Instituto Superior T\'ecnico, Universidade T\'ecnica de Lisboa Av. Rovisco Pais,
1049-001 Lisboa, Portugal}
\email{ mmlopes@math.ist.utl.pt}
\author{Rita Pardini}
\address{Dipartimento di Matematica, Universit\`a di Pisa\\
Largo B. Pontecorvo, 5,  56127 Pisa, Italy }
\email{pardini@dm.unipi.it}
\thanks{{\it Mathematics Subject Classification (2000)}: 14J29. \\
  The first  and the third author are members of G.N.S.A.G.A.--I.N.d.A.M. The second  author is a member of the Center for Mathematical
Analysis, Geometry and Dynamical Systems (IST/UTL).  This research was partially supported by FCT (Portugal) through program POCTI/FEDER and
Project PTDC/MAT/099275/2008.}
\begin{document}
\begin{abstract} \medskip  Minimal irregular surfaces of general type satisfy $K^2\geq 2p_g$ (see \cite{debarre}). 
In this paper we  classify those surfaces  for which  the equality  $K^2=2p_g$ holds.  
\end{abstract}
\maketitle
\tableofcontents

\section{Introduction}

By a result of O. Debarre (see \cite{debarre}) a minimal irregular surface $S$ of general type satisfies $K^2\geqslant 2p_g$. 
In this paper we obtain the following classification for the case $K^2=p_g$:

\begin{thm}\label{classification} Let $S$ be a minimal complex surface of general type of irregularity $q>0$  satisfying $K^2=2p_g$.
Then $q\le 4$. 

If $q=1$, then the Albanese fibration is a genus 2 fibration with 2--connected fibres and $S$ is a double cover of a $\pp^1$-bundle over the elliptic curve  $Alb(S)$. 

If $q\geqslant 2$ then $\chi=1$ and: \\
\begin{inparaenum}
\item for $q=2$,  $S$ is the minimal desingularization of a double cover of a  principally polarized  abelian surface $(A,\Theta)$ branched on an effective divisor of class $2\Theta$ with at most negligible singularities;\\
\item for  $q=3$,  $S$ is the symmetric product of a curve of genus 3;\\
\item for $q=4$,  $S$ is the product of two curves of genus 2.
\end{inparaenum}
\end{thm}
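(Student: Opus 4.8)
The plan is to leverage the slope inequality together with the structure of the Albanese map to constrain the geometry, then handle each value of $q$ separately.

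First I would record the basic numerical consequences. Since $S$ is irregular of general type, the Albanese image has dimension $1$ or $2$. If $\dim\Alb(S)=2$, the Severi-type inequalities (or the standard fact $K^2\geq 4\chi$ for surfaces with $\albdim=2$, due to Severi/Pardini) combined with $K^2=2p_g=2\chi+2q-2$ force $2\chi+2q-2\geq 4\chi$, i.e. $q\geq \chi+1$; together with Debarre's inequality and Noether's inequality one pins down $\chi=1$ and $q\in\{2,3,4\}$. If $\albdim=1$, so the Albanese map $f\colon S\to B$ is a fibration over a curve of genus $q$, I would apply the slope inequality $K_{S/B}^2\geq \frac{4g-4}{g}\deg f_*\omega_{S/B}$ where $g$ is the genus of a general fibre $F$; writing $K_S^2=K_{S/B}^2+8(q-1)$ and $\chi=\deg f_*\omega_{S/B}+ (q-1)(g-1)$, the equation $K^2=2p_g$ turns into an inequality in $q,g,\deg f_*\omega_{S/B}$ that I expect to force $q=1$ and $g=2$ (with $\deg f_*\omega_{S/B}=\chi$ and the slope inequality an equality). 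The case of equality in the slope inequality for genus $2$ fibrations is completely understood: it forces the fibres to be $2$-connected and $S$ to be a double cover of a $\pp^1$-bundle over $B=\Alb(S)$, branched along a divisor whose class can be computed; this gives the $q=1$ statement.

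For $q\geq 2$ we are in the case $\albdim=2$, $\chi=1$, and the strategy is to analyze the bicanonical or canonical map together with $\Alb(S)$. For $q=4$: here $\chi=1$, $p_g=4$, $K^2=8$, and a surface with $\albdim=2$ and $\chi=1$, $q=4$ must have $\Alb(S)$ of dimension $4$; by a theorem of Beauville (the characterization of surfaces with $q\geq 4$, $\chi=1$, namely $q=4\Rightarrow S\cong C_1\times C_2$ with both $C_i$ of genus $2$) one concludes directly — I would cite this and check the numerics match ($K^2=8=2p_g$, $p_g=4$). For $q=3$: with $\chi=1$, $q=3$, $p_g=3$, $K^2=6$, the Albanese map has image a surface in the $3$-dimensional abelian variety $\Alb(S)$; the relevant structure theorem (Catanese–Ciliberto–Mendes Lopes, Pirola, Hacon–Pardini) says a minimal surface with $q=3$, $\chi=1$ is either the symmetric square $C^{(2)}$ of a genus $3$ curve or a specific other family; one then eliminates the other family by the constraint $K^2=2p_g$, noting that for $C^{(2)}$ indeed $K^2=6$, $p_g=3$. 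For $q=2$: with $\chi=1$, $q=2$, $p_g=2$, $K^2=4$, the Albanese image is an abelian surface $A$ and $f\colon S\to A$ is generically finite; since $\chi(\OO_S)=1=\chi(\OO_A)$ and $f$ generically finite of degree $d$, a careful computation with $f_*\OO_S=\OO_A\oplus(\text{negative part})$ together with $p_g(S)=2$ forces $d=2$; then $S$ is birational to a double cover of $A$, the branch locus has class $2\Theta$ for a principal polarization $\Theta$ (computed from $K_S^2=4$ and the canonical bundle formula $K_S=f^*(\tfrac12 B)$), and the singularities of the branch divisor must be negligible (i.e. resolved by the minimal desingularization without changing $K^2$), which is exactly the assertion.

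The main obstacle I anticipate is the case $q=2$: unlike the higher-$q$ cases, where one can quote a clean structure theorem and merely verify numerics, for $q=2$ one must actually \emph{prove} that the Albanese degree is $2$ and that the double cover presentation has a branch divisor of the stated class with only negligible singularities — this requires controlling $f_*\OO_S$ via a Fujita-type decomposition and ruling out $\albdim=2$ surfaces of higher Albanese degree with these invariants. A secondary subtlety is making the dichotomy $\albdim\in\{1,2\}$ genuinely exhaustive and showing the slope-inequality computation for $\albdim=1$ really does exclude $q\geq 2$ rather than merely bounding it; this amounts to checking that equality in the slope inequality is forced and then invoking the (known) classification of genus $2$ fibrations attaining the slope bound.
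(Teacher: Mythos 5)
Your overall architecture (split by Albanese dimension, use Severi and the slope inequality for the numerics, then quote structure theorems for each $q$) matches the paper's, but there are two genuine gaps, one of which is the entire content of the paper.

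First, the reduction to $\chi=1$. For $\albdim(S)=2$ the Severi inequality gives $p_g\le 2q-2$, which pins down $\chi=1$ only when $q=2$. For $q=3$ it leaves open $p_g=4$, $\chi=2$, $K^2=8$, and neither Debarre's inequality ($K^2\ge 2p_g$, an equality by hypothesis, hence vacuous here) nor Noether's (strictly weaker) excludes it. This surviving case is not a formality: the paper devotes all of Proposition \ref{prop:no-chi2} to it, first showing such a surface would have no irrational pencil of genus $\ge 2$ and no pencil with $K\cdot F<4$, and then deriving a contradiction from the canonical map to $\pp^3$ (either its image lies on two quadrics, or it lies on a quadric whose rulings cut out pencils contracted by $|K_S+F|$). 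Your proposal has no mechanism to dispose of this case. (Your worry about the $\albdim=1$ dichotomy is, by contrast, unfounded: the slope inequality computation you sketch does close that case, essentially as in Claim \ref{cl:1}.)

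Second, and more seriously, the $q=2$ case. You correctly identify that one must prove the Albanese map has degree $2$, but the route you propose --- decomposing $f_*\OO_S=\OO_A\oplus E^{\vee}$ and forcing $\rk E=1$ from the invariants --- does not work: the constraints one extracts are $h^0(E^{\vee})=0$, $h^1(A,E^{\vee})=0$, $h^0(A,E)=\chi(E^{\vee})=1$, and these are perfectly consistent with $E$ of rank $\ge 2$. (Fujita decomposition is in any case a tool for fibrations, not for generically finite maps.) The paper's actual argument is of a completely different nature: it first shows the general paracanonical curve $C\in\Kcal$ is smooth of genus $5$ (Lemma \ref{lem:para-smooth}), embeds $\Pic^0(S)$ as an abelian surface $A\subset W_4(C)$ via restriction, and invokes the classification of maximal-dimensional abelian subvarieties of Brill--Noether loci (Theorem \ref{thm:main}); a cup-product/Serre-duality argument then rules out the Prym alternative, producing a degree-$2$ map $C\to C'$ that globalizes to an involution of $S$, whose quotient is identified with $\Theta(2)\cong A$ via \cite[Theorem 0.20]{ccm}. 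Without this (or Manetti's argument, which requires $K$ ample), the degree-$2$ claim is unproved, so the central case of the theorem is missing.
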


The assertion $q\leqslant 4$ and the classification in the case $q=4$ are in  \cite[Th\'eor\`eme 6.3] {debarre}.  For $q=1$, $K^2=2p_g $ is the same as   $K^2=2\chi$ and these surfaces have been classified in \cite{hoV}.  Their Albanese pencil has 2--connected genus 2 fibres,  and the relative canonical map determines a 2--1 cover of a $\pp^1$-bundle over the elliptic curve ${\rm Alb}(S)$ (see  \cite[Theorem 5.2]{hoV} for a detailed description). 

To prove Theorem \ref {classification} we start  by completing  the classification  of such irregular surfaces with $\chi=1$ (see \S \ref {catanese}). The only yet unknown case was $p_g=q=2$ (see \S \ref{CL}  for a discussion of the known cases). In  Theorem \ref{thm:pgq2}, which is the main result in this paper,  we show that  such surfaces are exactly the so called \emph{Catanese surfaces}, described in  (ii) of Theorem \ref {classification}.  The case $K$  ample was proved by Manetti \cite {man}. Our proof does not need
this strong hypothesis and is, in our view, conceptually more transparent. It relies, as usual in these matters, on two main tools:
the paracanonical system and the Albanese map (see \S \ref {paracan}). However 
a new, essential ingredient  is the recent classification of curves $C$ such that a Brill--Noether locus $W^ s_d(C)$, strictly contained in the jacobian $J(C)$ of $C$, contains a variety $Z$ stable under translations by the elements of a positive dimensional abelian subvariety $A\subsetneq J(C)$ and such that $\dim(Z)=d-\dim(A)-2s$, i.e., the maximum possible dimension for such a $Z$ (see \S \ref {absub} and \cite{new}).

To complete the classification theorem one needs to rule out  the existence of minimal irregular  surfaces with $K^2=2p_g$, 
 $q\geqslant 2$, $\chi> 1$.  This is done in \S \ref{CL}.

\section{Preliminaries}\label{sec:symm}

\subsection{Abelian subvarieties of Theta divisors of Jacobians}\label{absub}

In \cite {new}, following \cite {ah, df}, we considered the following situation:\\

\begin{inparaenum}
\item  [(*)] $C$ is a smooth, projective, complex curve of genus $g$,  $Z$ is an irreducible $r$--dimensional subvariety of a Brill--Noether locus $W^ s_d(C)\subsetneq J^ d(C)$, and $Z$ is stable under translations by the elements of an abelian subvariety $A\subsetneq J(C)$ of dimension $a>0$ (if so, we will say that $Z$ is $A$--\emph{stable}).\\
\end{inparaenum}

Here $J^d(C)$ is the set of equivalence classes of divisors of degree $d$ on $C$ and $J(C):=J^0(C)$ is the \emph{Jacobian variety} of $C$.

It was proved in \cite {df}  that if (*) holds, then $r+a+2s\leqslant d$. In  \cite {new}, improving on partial results in \cite {ah, df},
we give the full classification of the cases in which (*) holds and $r+a+2s=d$. We will not need here the full strength of the results in \cite {new}, but only the part concerning the case $(d,s)=(g-1,0)$ (see \cite [Theorem 3.1]{new}). 

\begin{thm}\label{thm:main}
Let $C$ be a curve of genus $g$. Let $A\subsetneq J(C)$ be an abelian variety of dimension $a>0$ and $Z\subset W_{g-1}(C)$ an irreducible,  $A$--stable variety of dimension $r=g-1-a$.
Then there is a degree 2 morphism $\varphi\colon C\to C'$, with $C'$ smooth of genus $g'$, such that one of the following occurs:\\
\begin{inparaenum}
\item[(a)] $g'=a$, $A=\varphi^*(J(C'))$ and $Z=W_{g-1-2a}(C)+\varphi^*(J^a(C'))$;\\
\item[(b)] $g'=r+1$, $\varphi$ is \'etale, $A$ is  the  Prym variety of $\varphi$ and $Z\subset W_{g-1}(C)$ is the connected component of $\varphi_*\inv(K_{C'})$ consisting of  divisor classes $D$ with $h^0(C,\mathcal O_C(D))$ odd, where $\varphi_*\colon J^{g-1}(C)\to J^{g-1}(C')$ is the {norm map}. 
\end{inparaenum}

In particular, $Z\cong A$ is an abelian variety if and only if either we are in case (a) and $g=2a+1$, or  in case (b). 
\end{thm}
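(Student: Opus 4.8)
The plan is to transfer the problem to the theta divisor $\Theta:=W_{g-1}(C)$ and its Gauss map. Since $\dim Z=g-1-a$, the Debarre--Fahlaoui inequality $r+a+2s\le d$, applied with $d=g-1$ and $s$ taken maximal with $Z\subseteq W^s_{g-1}(C)$, forces $s=0$: the general point $D\in Z$ has $h^0(C,\OO_C(D))=1$ and is therefore, by the Riemann singularity theorem, a smooth point of $\Theta$. At such a point $T_D\Theta$ is the hyperplane $\{v\in H^0(K_C)^* : s_D(v)=0\}$, where $s_D$ spans $H^0(K_C-D)$ (one--dimensional, since $h^0(K_C-D)=h^0(\OO_C(D))=1$). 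As $Z$ is $A$--stable we have $D+A\subseteq Z$, so $T_0A\subseteq T_DZ\subseteq T_D\Theta$; equivalently $s_D$ lies in the fixed subspace $U:=(T_0A)^\perp\subseteq H^0(K_C)$, of dimension $g-a$. Hence every general $D\in Z$ is a subdivisor of a member of the linear system $\mathfrak d:=|U|\subseteq|K_C|$, a $g^{g-a-1}_{2g-2}$ with $\dim\mathfrak d=\dim Z$.

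Next, the core of the argument is to extract a degree $2$ cover from $\mathfrak d$. Writing $\mathrm{div}(s_D)=D+D'$ with $\deg D=\deg D'=g-1$ shows that the general member of $\mathfrak d$ is reducible, with one half $D$ moving in an $a$--dimensional family of translates of $A$. Bertini--monodromy arguments then force the map $\phi_U\colon C\to\mathbb P^{g-a-1}$ attached to (the moving part of) $\mathfrak d$ --- the projection of the canonical image of $C$ away from $\mathbb P(T_0A)$ --- to have degree $\delta\ge 2$ onto its image; let $\varphi\colon C\to C'$ be the degree $\delta$ part of the Stein factorization. The main obstacle is to prove that $\delta$ equals exactly $2$, and this is where the \emph{equality} $r+a=g-1$, as opposed to a strict inequality, enters: a cover of degree $\ge 3$ would make the family of admissible half--divisors, and hence $Z$, of dimension strictly less than $g-1-a$. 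Equivalently --- and this is the route of \cite{new}, which refines \cite{ah, df} --- one shows that a positive--dimensional abelian subvariety of $J(C)$ some translate of which lies in $W_{g-1}(C)$ must arise from a degree $2$ map, using the $(g-1-2a)$--dimensional family of such translates furnished by the $A$--orbits filling out $Z$.

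Granting $\varphi\colon C\to C'$ of degree $2$, with covering involution $\sigma$ and $g':=g(C')$, I would read off the case from the $\sigma$--action. Since $A$ is $\sigma$--stable ($\sigma$ fixes $\varphi^*(J(C'))$ pointwise and acts as $-1$ on the Prym variety of $\varphi$), the subspace $U=(T_0A)^\perp$ is $\sigma$--stable, and examining how $\sigma$ acts on the half--divisors $D+D'=\mathrm{div}(s_D)$ shows that $U$ is a single eigenspace. If $U=H^0(K_C)^-$, then $g'=a$, $\mathrm{div}(s_D)$ is $\sigma$--invariant for general $D\in Z$, and $A=\varphi^*(J(C'))$; disentangling the moving half under the $A$--action gives $Z=W_{g-1-2a}(C)+\varphi^*(J^a(C'))$, which is case (a). If $U=H^0(K_C)^+=\varphi^*(H^0(K_{C'}))$, then $g'=r+1$, $A$ is the Prym variety, and $\mathrm{div}(s_D)=\varphi^*(E')+R$ for a general $E'\in|K_{C'}|$ ($R$ the ramification of $\varphi$); the requirement that $h^0(\OO_C(D))=1$, in particular be odd, together with the structure of the fibres of the norm map $\varphi_*$ over $K_{C'}$, forces $R=\emptyset$ (so $\varphi$ is \'etale) and identifies $Z$ as the odd connected component of $\varphi_*\inv(K_{C'})$, which is case (b). (The minor modifications when $C$ is hyperelliptic I leave aside.)

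Finally the concluding assertion is immediate: in case (a), $Z=W_{g-1-2a}(C)+\varphi^*(J^a(C'))$ is a translate of an abelian subvariety precisely when the summand $W_{g-1-2a}(C)$ reduces to a point, i.e.\ when $g=2a+1$ (for $g>2a+1$ that summand contains a translate of $C$, which here has genus $\ge 2$ and cannot sit inside a subtorus), while in case (b) the variety $Z$ is by construction a torsor under the Prym variety $A$ and so is an abelian variety isomorphic to $A$. To repeat, the crux of the whole proof is the middle step, pinning down the degree of $\varphi$ to be exactly $2$, which is where the hypothesis $r+a=g-1$ does its work.
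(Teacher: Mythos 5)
First, a point of order: the paper does not prove Theorem \ref{thm:main} at all --- it is stated as a quotation of \cite[Theorem 3.1]{new} and used as a black box --- so there is no internal proof to measure your attempt against; I can only judge it on its own terms.

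Your set-up is sound: the Debarre--Fahlaoui inequality with $d=g-1$, $r=g-1-a$ does force $s=0$; the general $D\in Z$ is then a smooth point of $\Theta$ by Riemann's singularity theorem; and $A$-stability does place the section $s_D$ spanning $H^0(K_C-D)$ in $U=(T_0A)^{\perp}$, so that $Z$ is carried into the linear system $|U|$, whose dimension $g-a-1$ equals $\dim Z$. The closing paragraph deducing the ``in particular'' clause is also fine. But there is a genuine hole exactly where you locate the crux. You assert that ``Bertini--monodromy arguments force'' $\phi_U$ to be non-birational onto its image, and that degree $\geqslant 3$ is excluded because it ``would make $Z$ of dimension strictly less than $g-1-a$''; neither claim is argued, and the first is the entire content of the theorem: projection of the canonical curve away from a \emph{general} $(a-1)$-plane is birational onto its image (for $g-a-1\geqslant 2$), so one must actually prove that $\mathbb{P}(T_0A)$ sits in special position, using the $(g-1-2a)$-dimensional family of $A$-orbits filling $Z$. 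Your fallback formulation --- ``a positive-dimensional abelian subvariety of $J(C)$ some translate of which lies in $W_{g-1}(C)$ must arise from a degree $2$ map'' --- is not a usable lemma but essentially a restatement of the goal: translates of $\varphi^*J(C'')$ lie in $W_{g-1}(C)$ for covers of \emph{any} degree $n$ as soon as $g-1\geqslant n g''$, so only the maximality hypothesis $r+a=g-1$ can force degree $2$, and you supply no mechanism by which it does so. The same looseness recurs in case (b), where ``the structure of the fibres of the norm map forces $R=\emptyset$'' is asserted rather than proved (for a ramified double cover the kernel of the norm is connected, and a dimension and parity count is actually needed there). In short, the skeleton is the recognized Abramovich--Harris/Debarre--Fahlaoui strategy and the two endpoints (a), (b) are correctly matched to the two eigenspaces of the covering involution, but the central step --- extracting the degree-$2$ cover and proving its degree is exactly $2$ --- is missing, and that is precisely the step for which the paper defers to \cite{new}.
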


\subsection{Some generalities on irregular surfaces}\label{paracan}

Let $S$ be a smooth, irreducible surface.  We will use the standard notation $q(S)=h^0(S, \Omega^1_S)$,
$p_g(S)=h^0(S, \Omega^2_S)$,  $\chi(S)=\chi(\mathcal O_S)=p_g(S)-q(S)+1$ for the \emph{irregularity}, the \emph{geometric genus}, and the Euler characteristic of the structure sheaf. We may often use the simplified notation $p_g, q, \chi$. Numerical [resp. linear] equivalence will be denoted by $\sim$ [resp. by $\equiv$]. 

We will denote by $a\colon S\to {\rm Alb}(S)$ the \emph{Albanese morphism} of $S$. The dimension of $a(S)$ is denoted by ${\rm Albdim}(S)$ and is called the \emph{Albanese dimension} of $S$.  

A \emph{pencil of genus $b$} on $S$ is a morphism $f: S\dasharrow B$, with connected fibres, with $B$ a smooth  curve of genus $b$. The indeterminacy points of $f$ are the \emph{base points} of the pencil. If $b>0$, the pencil is said to be \emph{irrational} and it has no base points. 

As usual, $\vert K_S\vert$ (or simply $\vert K\vert$) denotes the \emph{canonical system} of $S$. If $\eta\in \pic^0(S)$, the linear system $\vert K+\eta\vert $ is called a {\it paracanonical
system} of $S$. A curve in $C_\eta\in \vert K+\eta\vert $ is a {\it paracanonical curve} on $S$. 

Assume $S$ is irregular of general type. We will denote by $\mathfrak K_S$ (or simply by 
$\mathfrak K$) the \emph{paracanonical system} of $S$, i.e., the Hilbert scheme of paracanonical curves on $S$. Note the morphism $p: \mathfrak K\to {\rm Pic}^0(S)$ acting as $C\mapsto \mathcal O_S(C-K)$. 
There is a unique component $\Kcal_S$ (or simply $\Kcal$) of $\mathfrak K$ dominating ${\rm Pic}^0(S)$ via $p$. It is called the 
{\it main paracanonical system} of $S$.

If ${\rm Albdim}(S)=2$ and
$\eta\in \pic^0(S)$ is general, one has $h^1(S,\eta)=0$ hence $\dim
(\vert K+\eta\vert) = \chi -1$ (this is the so--called \emph{generic vanishing theorem}, see \cite [Theorem 1] {gl}).
If $\eta\in \pic^0(S)$ and $C\in \vert K+\eta\vert$ are general,
then $C$ corresponds to the general point of ${\Kcal}$, which has dimension $q+\dim (\vert K+\eta\vert) = p_g$.

\section{Surfaces with $K^2=4$ and $p_g=q=2$}\label{catanese}

Let $S$ be minimal, of general type, with $p_g=q=2$.  One has 
$K^2\ge 4$ (see \cite {debarre}) and the equality is
attained in the following example.
 
 \begin{ex}\label{ex:catanese}
Let $(A,\Theta)$ be a principally polarized  abelian
surface. Let $p\colon S\to A$
be the double  cover branched on an effective, smooth divisor $B$ in the class of  $2\Theta$
so that $p_*\Ocal_S=\Ocal_A\oplus \theta^{-1}$,
 with $\theta$ in the class of $\Theta$ and $S$ is smooth and minimal. Then
$K=p^*(\theta)$ and  the invariants
of $S$ are $p_g=q=2$, $K^2=4$. 
The divisor $B$ may have   irrelevant singularities. In that case $S$ will be the minimal resolution of the double cover of $A$ branched on $B$. 
One has $\Alb(S)\cong A$. 

A special case is when $\Theta$ is \emph{reducible}, i.e. $A$ is the product of  
elliptic curves, and $\Theta=E_1+ E_2$, with $E_i$ elliptic curves
such that $E_1\cdot E_{2}=1$. In that case $S$ has two elliptic pencils $\mathcal M_1,\mathcal M _2$ of curves of genus $2$
(see \cite [Example 7.1] {CML}; according to  \cite {zuc}, this is the only case in which $S$ has an irrational pencil
of curves of genus 2: we will not use this result though). 

Note that the curves in $\mathcal K_S$ are the proper transforms via $p$ of the curves in the class of $\Theta$. If 
$\Theta$ is irreducible, then the general curve in $\mathcal K_S$ is smooth, otherwise the general curve in $\mathcal K_S$ is the sum of 
a curve in $\mathcal M_1$ plus a curve in $\mathcal M _2$.
\end{ex}

In this section we prove the following classification result.

\begin {thm}\label{thm:pgq2}  A minimal surface of general type
with $p_g=q=2$, $K^2=4$ is as in Example \ref {ex:catanese}.
\end{thm}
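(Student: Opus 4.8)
The plan is to analyze a minimal surface $S$ of general type with $p_g=q=2$, $K^2=4$ through the interplay of the Albanese map $a\colon S\to A:=\Alb(S)$ (a surjective morphism onto an abelian surface, since $\Albdim(S)=2$ forces $K^2\ge 4$ with equality here) and the main paracanonical system $\Kcal_S$, which by the discussion in \S\ref{paracan} has dimension $p_g=2$ and dominates $\pic^0(S)\cong \hat A$ via $p$. The first step is to pin down the Albanese map: it is generically finite of some degree $m=\deg(a)$, and $K^2=4$ together with the slope/Severi-type inequalities for Albanese-generically-finite surfaces (Debarre's argument in \cite{debarre}) should force $m=2$. Indeed, if $a$ were birational one expects $K^2$ to be strictly larger; the borderline $K^2=2p_g$ is exactly the locus where $a$ has degree $2$ onto its image. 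So I would first show $\deg(a)=2$, hence $a$ factors as $S\to A$ a (possibly ramified, after resolving) double cover, with involution $\sigma$ on $S$ such that $S/\sigma$ is (birational to) $A$.

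Having a degree-$2$ Albanese map, the next step is to identify the branch divisor. Write $a_*\OO_S=\OO_A\oplus L^{-1}$ for a line bundle $L$ on $A$ with $L^{\otimes 2}\sim \OO_A(B)$, $B$ the branch locus; the invariants translate into $\chi(\OO_S)=\chi(\OO_A)+\chi(L^{-1})=\frac12 L^2$, and since $\chi(\OO_S)=p_g-q+1=1$ we get $L^2=2$. Thus $L$ is a polarization of type $(1,1)$ on $A$, i.e.\ a principal polarization, so $L$ is (a translate of) $\theta$ with $\theta^2=2$, $\Theta\cdot$ the theta divisor, and $B\in|2\Theta|$. Then $K_S=a^*\theta$ (up to the exceptional locus of the resolution of negligible singularities of $B$), giving $K^2=4$ as required, and one must argue $S$ is then forced to be minimal with only negligible (``irrelevant'') singularities on $B$ — this is where one uses that $S$ is minimal of general type to exclude worse singularities. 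This matches Example~\ref{ex:catanese} exactly.

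The genuinely hard part — and the place where the new input Theorem~\ref{thm:main} enters — is ruling out that $a$ could be \emph{composed with a pencil}, i.e.\ that instead of the ``double cover of an abelian surface'' picture one has an irrational pencil $f\colon S\to B'$ of curves of genus $2$ with $b'=g(B')\ge 1$, which a priori is not excluded by the numerics. This is analyzed via the paracanonical system: one studies the curve $C\in\Kcal_S$, shows the paracanonical map is related to a Brill–Noether locus on a fibre of $f$ (a genus $2$ curve $F$, so $W^s_d(F)$ sits inside $J(F)$), and the $A$-stability of the relevant subvariety $Z$ — coming from translations by $\pic^0(S)$ acting through the pencil — lands one precisely in the hypothesis $(*)$ of \S\ref{absub} with $r+a+2s=d$. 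Theorem~\ref{thm:main} then dictates the only possibilities: case (b) would correspond to the reducible-$\Theta$ (product of elliptic curves) sub-case of Example~\ref{ex:catanese}, and case (a) is handled similarly, so that in every case $S$ is reconstructed as the double cover of Example~\ref{ex:catanese} — the reducible $\Theta$ accounting exactly for the surfaces with an irrational genus-$2$ pencil. I expect the main obstacle to be precisely this: setting up the Brill–Noether/paracanonical dictionary carefully enough to verify the extremality $r+a+2s=d$ and then translating each output of Theorem~\ref{thm:main} back into the geometric statement that $S$ is a double cover of $(A,\Theta)$ branched on $|2\Theta|$.
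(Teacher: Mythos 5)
The central gap is your first step. You assert that Debarre's or Severi-type inequalities ``force'' $\deg(a)=2$ at the borderline $K^2=2p_g$, but no such statement is available: the characterization of the equality case of the Severi inequality (that $K^2=4\chi$ implies the Albanese map has degree $2$) was precisely the open problem here, proved by Manetti only under the strong hypothesis that $K$ is ample, and the paper explicitly sets out to avoid that hypothesis. You are therefore assuming the hardest part of the theorem. The paper's actual route is the reverse of yours: it first studies the main paracanonical system $\Kcal$ directly, shows (Lemma \ref{lem:para-smooth}) that either the general $C\in\Kcal$ is smooth of genus $5$ or one is already in the reducible-$\Theta$ case via two elliptic genus-$2$ pencils, and only then \emph{constructs} the degree-$2$ Albanese map. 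The construction goes through the restriction map $\Pic^0(S)\to W_4(C)$, whose image is an abelian surface $A\subset W_{g-1}(C)$ with $g=5$; Theorem \ref{thm:main} applied to this $Z=A$ (with $d=g-1=4$, $s=0$, $r=a=2$) yields a degree-$2$ map $\varphi\colon C\to C'$ onto a genus-$2$ curve, the Prym case (b) being excluded by a cup-product/generic-vanishing argument (Lemma \ref{lem:jacobian}); the involution on $C$ is then shown to globalize to an involution of $S$, and the quotient is identified with $A$ via the classification of surfaces containing a $2$-dimensional family of genus-$2$ curves with self-intersection $2$. Only after all this does the branch-divisor computation (your second paragraph, which is essentially sound) take place.

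Your third paragraph also misidentifies how Theorem \ref{thm:main} enters. It is not applied to a Brill--Noether locus on a genus-$2$ fibre of a hypothetical pencil, and it is not there to rule out a side case: it is applied to the genus-$5$ paracanonical curve and is the engine producing the double-cover structure in the main (irreducible-$\Theta$) case. Moreover, case (b) of Theorem \ref{thm:main} does not correspond to the reducible-$\Theta$ surfaces --- those arise from the failure of smoothness of the general paracanonical curve in Lemma \ref{lem:para-smooth} --- but is shown never to occur for these surfaces. As written, your outline has no valid argument for the key structural fact ($\deg(a)=2$), omits the necessary smoothness analysis of $\Kcal$, and sets up the Brill--Noether dictionary on the wrong curve, so the proof does not go through.
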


By \cite [Proposition 2.3]{CML},   minimal surfaces $S$ of general type with $K^2=4$ and  $p_g=q=2$  have ${\rm Albdim}(S)=2$, i.e.,  $a\colon  S\to {\rm Alb}(S)$ is surjective. 
Surfaces 
of general type with $p_g=q=2$ and with an irrational pencil   have been studied in \cite{pen0, pen, zuc}, but we will not use their results here.

\begin{lem}\label{lem:para-smooth} Let $S$ be a minimal
surface of general type with $p_g=q=2$, $K^2=4$. Then:\\
\begin{inparaenum}
\item either the general curve in the main paracanonical system $\Kcal$ is smooth,\\
\item or $S$ is as in Example \ref {ex:catanese}, with $\Theta$ reducible.
\end{inparaenum}
\end{lem}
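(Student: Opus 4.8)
The plan is to analyze the general paracanonical curve $C_\eta \in |K+\eta|$ for general $\eta \in \pic^0(S)$ and show that if it is \emph{not} smooth, then its singularities force a very rigid geometric structure, which can only be the reducible-$\Theta$ Catanese surface. First I would record the basic numerics: since $\text{Albdim}(S)=2$ by \cite[Proposition 2.3]{CML}, generic vanishing gives $h^1(S,\eta)=0$ and hence $\dim|K+\eta| = \chi - 1 = 0$ for general $\eta$; thus $\Kcal$ maps to $\pic^0(S)$ generically finitely, indeed (by the last paragraph of \S\ref{paracan}) $\dim \Kcal = p_g = 2 = q$, so $p\colon \Kcal \to \pic^0(S)$ is generically finite and dominant. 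So the general paracanonical curve is an \emph{isolated} member of its own linear system, with $C_\eta^2 = K^2 = 4$, $K\cdot C_\eta = 4$, hence $p_a(C_\eta) = 1 + \tfrac12(C_\eta^2 + K\cdot C_\eta) = 5$.

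Next I would study the map $a\colon S \to \text{Alb}(S) = A$ (an abelian surface, since $q=2$) restricted to $C_\eta$. The key point is that $|K+\eta|$ being a single curve with $\eta$ varying over a $2$-dimensional family means the images $a(C_\eta)$ sweep out $A$ in a controlled way; one expects $a$ restricted to the general $C_\eta$ to be birational onto its image, a curve $D_\eta \subset A$ of arithmetic genus computed from $D_\eta^2$. The arithmetic genus $5$ of $C_\eta$ together with $q=2$ strongly constrains things: if $a|_{C_\eta}$ is birational onto a curve of some class $\delta \in \NS(A)$ with $\delta^2 = 2(p_a(D_\eta)-1)$, then the difference $p_a(C_\eta) - p_a(D_\eta)$ is the number of singularities (with multiplicity) introduced. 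I would then invoke the standard dichotomy: either $C_\eta$ is smooth (case (i)), or $a(C_\eta)$ is singular / $a|_{C_\eta}$ is not birational, and in the latter situation the curve must factor through a nontrivial subvariety structure — which is exactly where the reducible $\Theta$ enters, since then $A = E_1 \times E_2$ splits and $C_\eta$ decomposes compatibly with the two elliptic pencils $\Mcal_1, \Mcal_2$ of genus-$2$ curves described in Example \ref{ex:catanese}.

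The main obstacle I anticipate is \textbf{ruling out intermediate singular configurations} — i.e., showing there is no surface with $p_g=q=2$, $K^2=4$ whose general paracanonical curve is singular (say nodal at one or two points) yet whose Albanese surface does \emph{not} split as a product of elliptic curves. Handling this cleanly is presumably where the Brill--Noether input (Theorem \ref{thm:main}) is meant to be deployed later in the paper, but at the level of this lemma I would argue as follows: a base point or a fixed singular point of the general $C_\eta$ would have to be preserved by the monodromy of the family, giving a distinguished point or a distinguished $0$-cycle on $S$ invariant under a $2$-dimensional family; pulling back to $A$ and using that $\text{Aut}(A)$ acts and that translations move any point, one forces the singular locus to come from an \emph{elliptic} sub-structure, i.e. from a genus-reduction $C_\eta \to C'$ of degree $2$ onto an elliptic curve — precisely case (a) of Theorem \ref{thm:main} with $a=1$, which in turn forces $\Theta$ reducible. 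I would close by matching the resulting data ($K = p^*\theta$, branch divisor in $|2\Theta|$, the two pencils $\Mcal_1,\Mcal_2$) against Example \ref{ex:catanese} to land in case (ii). The only genuinely delicate computation, which I would not grind through here, is the intersection-theoretic bookkeeping on $A$ verifying that no class $\delta$ with $1 \le \tfrac12\delta^2 \le 4$ and $\delta$ not of product type supports a singular paracanonical family.
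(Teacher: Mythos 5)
Your opening numerics (generic vanishing, $\dim|K+\eta|=0$, $\dim\Kcal=2$, $p_a(C)=5$) agree with the paper's setup, but from there your strategy diverges and has a genuine gap at its core. The decisive step is ruling out the possibility that the general $C_\eta\in\Kcal$ is irreducible but singular, and your proposed mechanism --- that a singular point would be ``preserved by the monodromy'' and then contradicted because ``translations move any point'' of $A$ --- does not work: the paracanonical curves $C_\eta$ are not translates of one another, nothing in $\Aut(A)$ or $A$ acts on $S$ compatibly with the family, and the singular point $x_{C_\eta}$ is perfectly free to vary with $\eta$, defining a rational map $\pic^0(S)\dasharrow S$. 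Excluding that configuration is precisely the content of the lemma, and the paper does it by a different, infinitesimal argument: the tangent space $T_{\Kcal,c}\subset H^0(C,\Ocal_C(C))$ is $2$--dimensional, hence a $g^1_4$ on $C$, and every section in it vanishes at every singular point of $C$; this forces a \emph{unique} singular point $x_C$, and then the non-dominance of $\eta\mapsto x_{C_\eta}$ produces a $1$--dimensional subfamily of $\Kcal$ with the common base point $x_C$ and no variable intersection elsewhere, i.e.\ a \emph{rational} pencil, which would make the parameter surface (birational to the abelian surface $\pic^0(S)$) ruled --- a contradiction. Nothing in your proposal replaces this step; the ``intersection-theoretic bookkeeping on $A$'' you defer is not a routine verification but the heart of the matter, and the Albanese image $a(C_\eta)\subset A$ is not a priori birational to $C_\eta$, so the genus count you sketch does not get off the ground.

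Two further points. First, the reducible case is not actually argued in your proposal: the paper writes $C=F+M$, uses the index theorem to get $K\cdot M_i\geqslant 2$ for each moving component, deduces exactly two components with $K\cdot M_i=2$, $M_i^2=0$, hence two genus--$2$ pencils which are elliptic by Beauville's lemma and $\albdim(S)=2$, kills $F$ by $2$--connectedness, and exhibits $f_1\times f_2\colon S\to E_1\times E_2$ as a double cover; the case $M$ irreducible with $F\neq 0$ is excluded via $M^2\geqslant 3$ from \cite[Lemma 4.1]{CML}. Second, Theorem \ref{thm:main} cannot be invoked at this stage as you suggest: it is applied in the paper only \emph{after} the lemma, to the Brill--Noether locus $W_4(C)$ of the now-known-to-be-smooth general paracanonical curve, so using it to prove smoothness would put the cart before the horse.
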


\begin{proof} Let $C\in \Kcal$ be general. Write $C=F+M$ where $F$ is the fixed divisor of $\Kcal$.  

Suppose first  $M$  reducible.  
Since $\dim({\Kcal})=2$, $M$ must consist of two distinct
irreducible components $M_i$ each moving in a 1--dimensional family of curves $\mathcal M_i$, with $1\leqslant i\leqslant 2$. 
The index theorem yields
$K\cdot M_i\geqslant 2$, for $1\leqslant i\leqslant 2$. On the other hand  $4=K^2=K\cdot F+K\cdot M_1+K\cdot M_2$, 
hence $K\cdot F=0, K\cdot M_1=K\cdot M_2=2$. Then $M_1^2=M_2^2=0$ and ${\Mcal}_i$ is a pencil
of curves of genus $2$, for $1\leqslant i\leqslant 2$. Such a pencil is not rational by  \cite [Lemma on p. 345] {Be},
hence it is of genus 1, because ${\rm Albdim}(S)=2$. This implies $M_1\cdot F=M_2\cdot F=0$, hence $F=0$ by the $2-$connectedness of paracanonical divisors  and 
$M_1\cdot M_2=2$. 
 Let $f_i\colon  S\to E_i$ be the  elliptic pencils  ${\Mcal}_i$, for $1\leqslant i\leqslant 2$. The morphism $f=f_1\times f_2\colon  S\to E_1\times E_2$ is a double cover and we are in case 
(ii). 

Next we may assume $M$ irreducible, and  we prove that $C$ is irreducible. Indeed, the argument of   \cite[Lemma 4.1]{CML} shows that $M^ 2\geqslant 3$. 
Then $F\not=0$ yields  $K^2\geqslant K\cdot M=F\cdot M+M^2\geqslant 5$, a contradiction. 

Finally we prove that $C$ is smooth.  Assume, to the contrary,  $C$ singular. 
Let $c\in \Kcal$ be the point corresponding to $C$.
The $2$--dimensional  tangent space
$T_{\Kcal,c}$ is contained in $H^0(C,N_{C\vert S})\cong H^0(C,{\Ocal}_C(C))$. It therefore
corresponds to a $g^1_4$ on $C$. Every section of $H^0(C,{\Ocal
}_C(C))$ in $T_{\Kcal,c}$ vanishes at each singular point of $C$. This implies that $C$
has a unique singular point $x_C$. Consider the rational 
map $\pic^0(S)\dasharrow S$ which associates to $C$ its singular point $x_C$. 
This map is not dominant. This means that for $C$ general in ${\Kcal}$ there is a 
$1$--dimensional system ${\Kcal }_C$ of curves in ${\Kcal}$ sharing with $C$ the singular point $x_C$. Since
the curves in ${\Kcal}_C$ have no variable intersection
off $x_C$, then ${\Kcal}_C$ is a pencil, and it is rational, because it has the base point
$x_C$. Thus the surface $P$ parametrizing ${\Kcal}$ would be ruled, a contradiction, since $P$ is birational to ${\rm Pic}^0(S)$.
 \end{proof}
 
By Lemma \ref {lem:para-smooth} we may assume from now on that the general curve  $C\in \Kcal$ is smooth of genus 5. 
Consider the \emph{restriction morphism} $\mathfrak r\colon \Pic^0(S)\to J^4(C)$ acting as  $\eta\mapsto  \mathcal O_C(C+\eta)$. This map is injective by \cite[Proposition 1.6]{cfm}.  By generic vanishing,   if $\eta\in \Pic^0(S)$ is general, one has $h^0(C,O_C(C+\eta))=h^0(C,O_S(C+\eta))=1$. Hence the image $A$ of $\mathfrak r$ is an abelian surface contained in $W_4(C)$.

\begin{lem}\label{lem:jacobian} In the above set up there exists a smooth genus $2$ curve $C'$ and a degree 2 morphism $\varphi\colon C\to C'$ such that $A=\varphi^*(J^2(C'))$.
\end{lem}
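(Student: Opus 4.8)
The plan is to apply Theorem~\ref{thm:main} to the abelian surface $A$, which lies in $W_4(C)=W_{g-1}(C)$ since $g=5$, and then to rule out the "Prym" alternative it produces by a parity argument on $h^0(C,\mathcal O_C(C))$.

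To set up the hypotheses of Theorem~\ref{thm:main}, note that $\mathfrak r$ is the composition of the homomorphism $\mathfrak r_0\colon\Pic^0(S)\to J(C)$, $\eta\mapsto\eta|_C$, with translation by $\mathcal O_C(C)$; since $\mathfrak r$ is injective, so is $\mathfrak r_0$, and $\tilde A:=\mathfrak r_0(\Pic^0(S))$ is an abelian subvariety of $J(C)$ of dimension $q=2$, properly contained in $J(C)$. Then $A=\mathcal O_C(C)+\tilde A$ is a translate of $\tilde A$, hence an irreducible, $\tilde A$-stable subvariety of $W_{g-1}(C)$ of dimension $2=g-1-\dim\tilde A$. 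Theorem~\ref{thm:main}, applied with $\tilde A$ in the role of $A$ and $A$ in the role of $Z$, produces a degree~$2$ morphism $\varphi\colon C\to C'$ onto a smooth curve $C'$ of genus $g'$ such that one of the following holds: (a) $g'=2$, $\tilde A=\varphi^*(J(C'))$ and $A=W_0(C)+\varphi^*(J^2(C'))$, which equals $\varphi^*(J^2(C'))$ since $W_0(C)=\{\mathcal O_C\}$; or (b) $g'=3$, $\varphi$ is \'etale, $\tilde A$ is the Prym variety of $\varphi$, and $A$ is the component of $\varphi_*\inv(K_{C'})$ consisting of the classes $D$ with $h^0(C,\mathcal O_C(D))$ odd. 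In case (a) the lemma follows (and indeed $g=2\cdot 2+1$, as it must be by the last assertion of Theorem~\ref{thm:main} since $A\cong\tilde A$ is abelian), so it remains to exclude case (b).

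To do so I compute $h^0(C,\mathcal O_C(C))$ and find it to be even. As $C\in\Kcal$ is general we may write $C\in|K_S+\eta_0|$ with $\eta_0\in\Pic^0(S)$ general, and then the generic vanishing theorem (valid since $\albdim(S)=2$), together with Serre duality, gives $h^0(S,\mathcal O_S(C))=\chi=1$ and $h^1(S,\mathcal O_S(C))=h^1(S,\mathcal O_S(-\eta_0))=0$. In the long exact cohomology sequence of $0\to\mathcal O_S\to\mathcal O_S(C)\to\mathcal O_C(C)\to0$ the map $H^0(S,\mathcal O_S)\to H^0(S,\mathcal O_S(C))$ is an isomorphism of one-dimensional spaces (it sends the constant $1$ to a nonzero section), so $H^0(S,\mathcal O_S(C))\to H^0(C,\mathcal O_C(C))$ vanishes and $H^0(C,\mathcal O_C(C))\cong H^1(S,\mathcal O_S)$; hence $h^0(C,\mathcal O_C(C))=q=2$. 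On the other hand $\mathcal O_C(C)=\mathfrak r(0)\in A$, so in case (b) the integer $h^0(C,\mathcal O_C(C))$ would have to be odd, a contradiction. Thus case (a) holds, and the lemma is proved.

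The only genuinely delicate point is the exclusion of case (b); everything else is bookkeeping around Theorem~\ref{thm:main}. (An alternative would be to observe that in case (b) $\Pic^0(S)\cong\tilde A$ would be a two-dimensional Prym variety and to push the geometry of the Albanese map to a contradiction, but the parity argument above is shorter and more transparent.)
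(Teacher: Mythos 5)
Your proof is correct, and for the one delicate step --- excluding the Prym alternative (b) of Theorem \ref{thm:main} --- it takes a genuinely different and more economical route than the paper. The paper rules out (b) by a tangent--space argument: invoking Beauville's result (\cite{beauville-annulation}, valid because $q$ is even) that cup product with a general canonical section gives an isomorphism $H^1(S,\mathcal O_S)\to H^1(S,\mathcal O_S(K_S))$, restricting to $C$, and observing that the Prym involution would interchange $\mathcal O_C(C)$ and $\mathcal O_C(K_S)$ while acting as $-1$ on the common tangent space $T$ --- incompatible with $\cup s$ being an isomorphism for general $s\in H^0(C,\mathcal O_C(K_S))$ but never for $s\in H^0(C,\mathcal O_C(C))$. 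You instead use the parity statement built into case (b) itself: $Z$ would be the component of $\varphi_*^{-1}(K_{C'})$ on which $h^0$ is odd, so the single point $\mathcal O_C(C)=\mathfrak r(0)\in A$ with $h^0(C,\mathcal O_C(C))=q=2$ already yields a contradiction. Your computation of that $h^0$ via generic vanishing and the restriction sequence is sound; it produces exactly the isomorphism $H^0(C,\mathcal O_C(C))\cong H^1(S,\mathcal O_S)$ that the paper records in item (iv) of its proof by dualizing. Note that both arguments ultimately hinge on the evenness of $q$: in yours it enters as the parity of $h^0(C,\mathcal O_C(C))=q$, which is pleasingly consistent with the paper's closing remark that for $p_g=q=3$ the Prym case genuinely occurs. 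What the paper's longer argument buys is the geometric picture elaborated in Remark \ref{rem:g}; what yours buys is brevity and independence from \cite{beauville-annulation}. Your preliminary bookkeeping --- translating $A$ to the abelian subvariety $\tilde A\subset J(C)$ so that the hypotheses of Theorem \ref{thm:main} are literally satisfied, and simplifying $W_0(C)+\varphi^*(J^2(C'))$ in case (a) --- is also correct and slightly more careful than what the paper writes explicitly.
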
 
\begin{proof}
We can apply Theorem \ref{thm:main} with $g=5$ and $r=a=2$, hence it suffices to show that case (b) of that theorem does not occur. 
To prove this, we make the following remarks:\\
\begin{inparaenum}
\item  [(i)]  by generic vanishing, the long exact sequence in cohomology of
\[
0\to \mathcal O_S(K_S-C)\to  \mathcal O_S(K_S)\to  \mathcal O_C(K_S)\to 0
\]
determines isomorphisms
\begin{equation}\label{eq:iso}
H^i(S,\mathcal O_S(K_S))\cong H^i(C,  \mathcal O_C(K_S)),\,\, {\rm for} \,\, 0\leqslant i\leqslant 1;
\end{equation}
\item [(ii)] by the analysis in \cite [Proof of Proposition 4, p.  155] {beauville-annulation} (which holds because $q=2$ is even),  if $s\in H^0(S, O_S(K_S))$ is general, then 
\[
H^1(S, \mathcal O_S) \buildrel  {\cup s}\over \longrightarrow H^1(S, O_S(K_S))
\]
is an isomorphism;\\
\item [(iii)] hence, if we denote by $T$ the image of $H^1(S,\mathcal O_S)$ in $H^1(C, \mathcal O_C)$ under the obvious, injective restriction map,  
and if  $s\in H^0(C, O_C(K_S))$ is general, then 
\begin{equation}\label{eq:iso1}
T \buildrel  {\cup s}\over \longrightarrow H^1(C, O_C(K_S))
\end{equation}
is an isomorphism;\\
\item [(iv)] dualizing \eqref {eq:iso} one has isomorphisms
\begin{equation}\label{eq:iso2}
H^i(C,\mathcal O_C(C))\cong H^{i+1}(S,  \mathcal O_S),\,\, {\rm for} \,\, 0\leqslant i\leqslant 1.
\end{equation}
Hence for every non--zero $s\in H^0(C,\mathcal O_C(C))$ there exists a non--zero $v\in T$ such that $v\cup s=0$:  $v$ is the element in $T\cong H^{1}(S,  \mathcal O_S)$ corresponding to $s$ in the isomorphism in \eqref {eq:iso2} for $i=0$.  Thus in this case 
\begin{equation}\label{eq:iso3}
T \buildrel  {\cup s}\over \longrightarrow H^1(C, O_C(C))
\end{equation}
is not an isomorphism.
\end{inparaenum}

Suppose, by contradiction,  we are in the Prym case (b) of Theorem \ref{thm:main} and denote by $\iota$ the corresponding fixed point free   involution of $C$.
 Then $\iota$ on $C$ acts on the Prym 
variety $A\subset J^{4}(C)$ as $\mathcal O_C(D)\mapsto \mathcal O_C(K_C-D)$. Hence $\iota$ 
interchanges $\mathcal O_C(C)$ and $\mathcal O_C(K_S)$ and it acts on $T\cong H^{1}(S,  \mathcal O_S)$, which is the tangent space to $A$, as multiplication by 
$-1$. This contradicts the fact that \eqref {eq:iso1} is an isomorphism whereas \eqref {eq:iso3} is not. \end{proof}

\begin{rem}\label{rem:g} It is perhaps useful to briefly explain the geometric idea underlying the proof of Lemma \ref {lem:jacobian}.

Consider the surface $P$ parametrizing ${\Kcal}$. It is the blow--up of ${\rm Pic}^0(S)$
at those, finitely many, points $\eta$ such that the linear system $\vert K+\eta\vert$ is \emph{superabundant} (i.e., $h^1(S,\mathcal O_S(K+\eta))>0$;  actually one proves that $h^1(S,\mathcal O_S(K+\eta))\leqslant 1$ for all $\eta\in {\rm Pic}^0(S)$) but not \emph{exuberant} (which means that $\vert K+\eta\vert\subsetneq \Kcal$, in which case $\vert K+\eta\vert\cap \Kcal$ is a single curve). By \cite[Proposition 4]  {beauville-annulation}, the canonical system $\vert K\vert$ is exuberant, so there is a unique canonical curve $C_0\in \Kcal$. Accordingly we do not need to blow up ${\rm Pic}^0(S)$ at $0$ in order to obtain $P$. 

Consider the \emph{restriction map} $\mathfrak s\colon P\dasharrow C(4)$, with  target the 4--tuple symmetric product of $C$. This map is not defined at the point $c$ corresponding to $C$ and it maps a point $c'$ corresponding to a curve $C'\in \mathcal K$ different from $C$ to the degree 4 divisor  cut out on $C$ by $C'$.
In order to resolve the indeterminacy of $\mathfrak s$, one has to blow--up  $c$, thus obtaining a new surface $P'$ with the exceptional divisor $E$ corresponding to $c$. One has $E\cong \vert \mathcal O_C(C)\vert$ (see the proof of Lemma \ref {lem:para-smooth}), and $\mathfrak s$ extends naturally to the points of $E$.  We denote by $\mathfrak s'\colon P'\to C(4)$ the extension of $\mathfrak s$, which is injective by \cite[Proposition 1.6]{cfm}. We abuse notation and identify $P'$ with  ${\rm Im}(\mathfrak s')$.

Apply now Theorem \ref {thm:main}. We have a degree 2 morphism $\varphi: C\to C'$ and  assume we are in case (b). Let $\iota$
 be the involution on $C$ determined by $\varphi$. This involution acts on $C(4)$ and fixes $P'$,  acting on it  in a non--trivial way. So  $\sigma(E)\neq E$ should be a curve on $P'$. On the other hand
a general point on $E$ corresponds to a general divisor $D\in  \vert\mathcal O_C(C)\vert$ and $\sigma(D)$ is  a general divisor  in $\vert\mathcal O_C(K_S)\vert$. 
This is a contradiction because, as we saw, $P'$ contains only one point $c_0$ corresponding to a canonical curve $C_0\in \vert K_S\vert$, hence ${\rm Im}(\mathfrak s')$
contains only the divisor $D_0\in \vert\mathcal O_C(K_S)\vert$ cut out by $C_0$ on $C$. So case (b) does not occur and we are in case (a). 
\end{rem}

\begin{proof}[End of proof of Theorem \ref{thm:pgq2}] By Lemma \ref {lem:para-smooth}, we may assume $C\in \Kcal$ general to be smooth. By Lemma \ref{lem:jacobian}, there is a degree $2$ morphism $\varphi\colon C \to C'$, with $C'$ of genus 2,  such that $A=\varphi^*(J^2(C'))$. So $\Pic^0(S)$ is a  principally polarized abelian surface and we may identify both $\Pic^0(S)$ and $\Alb(S)$ with $A$. 

We claim that there is an involution $\mathfrak i$ of $S$ such that $\mathfrak i (C)=C$ and $\mathfrak  i_{\vert C}$ coincides with the involution $\iota$ determined by the double cover $\varphi\colon C \to C'$.  Indeed,  Lemma \ref{lem:jacobian} implies that, if $x\in C$ is general and $D\in \Kcal$ is any curve containing $x$, then $\iota(x)\in D$. Hence $\iota(x)$ is independent of $C$, proving our claim. Note that $\mathfrak i$ is biregular, since $S$ is minimal of general type. 

Consider the minimal desingularization $X$ of the quotient $S/\mathfrak i$.
It contains a 2-dimensional family of (generically smooth and irreducible) curves $\Gamma$ of genus $2$, with $\Gamma^ 2=2$, i.e. the genus 2 quotients of the curves in $\Kcal_S$ under the involution $\mathfrak i$. By \cite [Theorem 0.20]{ccm},
 all curves $\Gamma$ are isomorphic and $X$ is birational to the symmetric product $\Gamma(2)$, which is birational to $J(\Gamma)\cong A$. So we have a birational morphism $X\dasharrow  A$, hence we have a degree 2 map $S\dasharrow A=\Alb(S)$, which is the Albanese morphism.  Note that via $X\dasharrow A$, the curves $\Gamma$ map to the $\Theta$ divisors. 

Let $B$ be the branch divisor of $a\colon  S\to A$. Since the curves in $\Kcal$ have genus 5, we have $B\cdot \Theta=4$. By the index theorem we have $B^2\leqslant 8$. On the other hand $B$ is divisible by $2$ in $NS(A)$, hence $B^2\geqslant 8$. In conclusion $B^2=8$ and the index theorem implies that the class of $B$ is $2\Theta$ and the assertion follows. 
\end{proof}

\begin{rem} Let $S$ be as in Example \ref {ex:catanese} with $B$ smooth.
 Then the ramification curve $R$ is a canonical curve isomorphic to $B$, hence  smooth. So the general curve $C\in \vert K\vert$ is smooth and, as above,  we can consider the restriction map ${\rm Pic}^0(S)\to W_4(C)$, whose image is an abelian surface $A\subset W_4(C)$.  We can apply Theorem \ref{thm:main}, but now Lemma  \ref{lem:jacobian} does not hold, and in this situation the Prym case (b), and not case (a),  of Theorem  \ref{thm:main} occurs. Indeed, the bicanonical morphism  is not birational for $S$: it is in fact composed with an involution $\sigma$ such that $\Sigma=S/\sigma$ is a surface with $20$ nodes (over which the double cover $\phi: S\to \Sigma$ is ramified). If $X$ is the  minimal desingularization of $\Sigma$, one has $p_g(X)=2, q(X)=0, K^2_X=2$ 
(see \cite {CML}). So the general curve $C\in \vert K_S\vert $ is the \'etale double cover of the general curve $C'\in \vert K_X\vert$. One has the following commutative diagram
\[
 \renewcommand{\arraystretch}{1.3}
 \begin{array}{ccc}
 \hphantom{\scriptstyle{g'}\ } S &\buildrel  \phi\over \longrightarrow{} & \Sigma
 \hphantom{\ \scriptstyle{g}} \\
 {\scriptstyle{p}\ }\big\downarrow && \big\downarrow \ \scriptstyle{q} \\
 \hphantom{\scriptstyle{q}\ } A & \buildrel \mathfrak \kappa\over \longrightarrow{} & Y
 \hphantom{\ \scriptstyle{p}}
\end{array}
\]
where $Y\subset \PP^3$ is the 16--nodal Kummer surface of $A$ ($\kappa$ is the obvious double cover), and $q: \Sigma\to Y$ is a double cover branched along a  smooth plane section $H$ of $Y$ (which pulls back via $\kappa$ to the branch divisor of $a$), plus six nodes lying on a conic $\Gamma\subset Y$. 

There is a unique curve $C_0$ in the intersection of $ \vert K_S\vert $ with $\Kcal$ (see Remark \ref {rem:g}), i.e., the proper transform of $\Gamma$ on $S$. It is interesting to notice that for $C_0$ both cases (a) and (b) of Theorem  \ref{thm:main} occur at the same time (see \cite [Remark 3.2]{new}).

We finally notice that the same idea of proof of Theorem \ref {thm:pgq2} can be applied to recover the classification of minimal surfaces $S$ with $p_g=q=3$ (see \cite  [Proposition (3.22)]  {ccm}). In this case $q$ is odd, the analogue of Lemma  \ref{lem:jacobian} does not hold, and the Prym case (b) of Theorem  \ref{thm:main} occurs. We do not dwell on this here.  \end{rem}

 \section{Proof of Theorem \ref{classification}}\label{CL}

Let $S$ be a minimal complex surface of general type of irregularity $q>0$  satisfying $K_S^2=2p_g$.
By \cite[Th\'eor\`eme 6.3] {debarre} one has $q\le 4$ and $q=4$ if and only if $S$ is the product of two curves of genus 2 (see also (\cite {Be}).
If $q=1$, then $K_S^2=2\chi$: these surfaces have described in \cite{hoV} (see also \cite{Ca} for the case $q=\chi=1$). 
So we may assume $2\leqslant q\leqslant 3$ and

\begin{itemize}
\item $q=2$: $S$ is as in Example \ref {ex:catanese} (see Theorem \ref{thm:pgq2});
\item $q=3$: $S$ is the symmetric product of a curve of genus 3 (\cite [Proposition (3.22)]  {ccm}).
\end{itemize}

So to prove Theorem \ref{classification}, it suffices to show that:

\begin{prop} \label{prop:no-chi2}
There is no minimal surface $S$  with  $2\le q\le 3$, $\chi\ge 2$ and $K^2=2p_g$.

\end{prop}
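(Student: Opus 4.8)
The plan is to rule out the two remaining numerical possibilities, $q=2,\ \chi\ge 2$ and $q=3,\ \chi\ge 2$, using the Albanese map together with Debarre's inequality applied in families, i.e.\ to the fibres of the Albanese pencils/fibrations. The starting observation is that for such a surface $K^2=2p_g=2\chi+2(q-1)$; so when $\chi\ge 2$ the surface is ``slope-critical'' in Debarre's sense, which is exactly the situation where the geometry is extremely rigid. First I would recall that, by \cite{debarre}, the inequality $K^2\ge 2p_g$ is an equality only when the Albanese map is especially constrained: if ${\rm Albdim}(S)=2$ one gets strong numerical restrictions on the canonical class, and if ${\rm Albdim}(S)=1$ the Albanese fibration $f\colon S\to B$ has fibres of genus $2$ (the genus-$2$ case is where the bound $K^2\ge 2p_g$ is sharp in the fibred setting). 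I would split the argument along ${\rm Albdim}(S)$.

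\textbf{The Albanese surface case.} Suppose ${\rm Albdim}(S)=2$. Then $a\colon S\to {\rm Alb}(S)$ is generically finite of some degree $d$ onto its image $a(S)$, and one has the standard inequality $K_S^2\ge d\cdot (\text{something involving the polarization on }{\rm Alb}(S))+(\text{contribution of the exceptional locus})$. The key point is that equality $K^2=2p_g$ forces $\chi=1$: indeed one can run the slope computation of \cite{debarre} (or invoke it directly) to see that in the Albanese-surface case $K^2=2p_g$ implies $\chi(\mathcal O_S)=1$, contradicting $\chi\ge 2$. Concretely, for $q=2$ the image $a(S)$ is an abelian surface $A$, and $K^2\ge 2d\,p_g^{\rm geom}(A)$-type bounds together with $\chi(\mathcal O_S)\ge d\cdot\chi(\mathcal O_A)=0$ being too weak, one instead uses that $p_g(S)\le h^0(A,\ldots)$ and the Bogomolov–Miyaoka–Yau or Noether-type constraints pin down $\chi=1$. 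For $q=3$, ${\rm Albdim}(S)=2$ means $a(S)$ is a surface inside an abelian threefold; by \cite{ccm} and the classification of surfaces with $p_g=q=3$, equality $K^2=2p_g$ again forces the symmetric-product case, hence $\chi=1$. So in the Albanese-surface case $\chi\ge 2$ is impossible.

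\textbf{The Albanese pencil case.} Now suppose ${\rm Albdim}(S)=1$, so we have an irrational pencil $f\colon S\to B$ with $B={\rm Alb}(S)$ when $q=1$ — but here $q\ge 2$, so $B$ has genus $\le q$ and, after Stein factorization, $S\to B$ has connected fibres of genus $b\ge 2$. Debarre's relative inequality gives $K_S^2\ge 2p_g$ with equality essentially only if the general fibre has genus $2$ and $f$ has no ``extra'' positivity, i.e.\ the Arakelov-type slope inequality $K_{S/B}^2\ge (4-\tfrac4{b})\,\deg f_*\omega_{S/B}$ is an equality in the genus-$2$ case $b=2$. Using $\chi(\mathcal O_S)=\deg f_*\omega_{S/B}+(g(B)-1)(b-1)$ and $K_S^2=K_{S/B}^2+8(g(B)-1)(b-1)$, together with $p_g=\chi+q-1$ and $q\le 3$, one gets a short Diophantine analysis: equality $K_S^2=2p_g$ in the genus-$2$ fibred case forces $g(B)=1$, hence $q=1$ (after accounting for the contribution of the possibly larger base before Stein factorization, one checks $q>1$ cannot occur), contradicting $q\ge 2$. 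The case $b\ge 3$ is excluded because then the slope inequality is strict enough to give $K_S^2>2p_g$.

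\textbf{Main obstacle.} The delicate point is making the relative/slope inequality \emph{sharp} and tracking exactly when equality holds, in particular controlling the base curve $B$ and the effect of Stein factorization when $q\ge 2$: one must show that the only way $K_S^2=2p_g$ with an Albanese pencil is $q=1$, which is where the hypothesis $q\ge 2$ is contradicted. A secondary subtlety is the Albanese-surface subcase with $q=2$, where one must be careful that the naive bound $\chi(\mathcal O_S)\ge d\cdot\chi(\mathcal O_A)$ is vacuous, so the conclusion $\chi=1$ must be extracted from a finer analysis (e.g.\ via the structure of $a_*\mathcal O_S$ and the branch divisor, exactly as in the proof of Theorem~\ref{thm:pgq2}, or by citing \cite{debarre} directly). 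I expect the cleanest write-up simply invokes \cite[Th\'eor\`eme 6.3]{debarre} for the equality discussion and reduces everything to the genus-$2$ fibred case, then kills that case by the slope bookkeeping above.
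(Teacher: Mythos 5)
There is a genuine gap, and it sits at the heart of the proposition. Your treatment of the case $\albdim(S)=2$ asserts that ``equality $K^2=2p_g$ forces $\chi=1$'' by ``running the slope computation of \cite{debarre}'' or by numerical restrictions, but no such numerical argument exists. The sharp numerical tool here is Severi's inequality $K^2\geqslant 4\chi$ (proved in \cite{pa}), which together with $K^2=2p_g=2(\chi+q-1)$ gives only $\chi\leqslant q-1$. For $q=2$ this does kill $\chi\geqslant 2$ outright, but for $q=3$ it leaves open the case $p_g=4$, $q=3$, $\chi=2$, $K^2=8$, which passes every slope/Arakelov/Severi test. Your appeal to the classification of surfaces with $p_g=q=3$ is beside the point, since that classification concerns $\chi=1$ and the surviving case has $p_g=4\neq q=3$. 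Excluding $(p_g,q,\chi)=(4,3,2)$ is precisely where the paper has to work: it first shows (via Arakelov and Horikawa's bound $K^2\geqslant 2\chi-6+6b$) that there is no irrational pencil of base genus $b>1$, deduces via Castelnuovo--de Franchis that the canonical image in $\pp^3$ is a surface, proves $K\cdot F\geqslant 4$ for every pencil (so no genus $2$ pencil and the bicanonical map is birational), and then analyzes the multiplication map $\mathrm{Sym}^2 H^0(K)\to H^0(2K)$ between two $10$--dimensional spaces: if it is injective one gets two distinct quadrics through an irreducible nondegenerate degree $8$ paracanonical image, and if not the canonical image is a quadric whose rulings produce pencils contracted by $|K+F|$ --- contradictions in both cases. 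None of this is present, or replaceable by numerics, in your proposal.

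Your $\albdim(S)=1$ case is closer in spirit to the paper's Claim 4.3 (there the base of the Albanese pencil has genus exactly $q$, so no Stein factorization bookkeeping is needed), and the combination of the slope inequality for hyperelliptic fibrations, Horikawa's genus bound, and Arakelov's inequality $K^2\geqslant 8(g-1)(q-1)$ does carry that case; but as written your ``short Diophantine analysis'' is only sketched, and in the paper it requires splitting on $K^2<3\chi$ versus $K^2\geqslant 3\chi$ to invoke the hyperelliptic slope inequality in the first regime. In summary: the pencil case can be salvaged along your lines, the $q=2$ surface case follows from Severi's inequality (which you should cite explicitly rather than gesture at), but the $q=3$, $\chi=2$ surface case is an unfilled hole that constitutes most of the actual content of the proposition.
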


\begin{proof}
Suppose by contradiction that such a surface $S$ exists.

\begin {claim}\label{cl:1} 
One has $\albdim (S)=2$, $q=3$, $p_g=4$.
\end{claim}

\begin{proof}[Proof of the Claim] Suppose, by contradiction, that  $\albdim (S)=1$ and  let   $a\colon S\to B$  be the Albanese pencil, with $B$ of genus $q$. 
We denote by $g$ the genus of the fibres of $a$  and one has $g\geqslant 2$  because  $S$ is of general type,  

By  \cite[Theorem 3.1]{hoV}, if $K^2<3\chi$,
 the general fibre of $a$ is hyperelliptic of genus  $g=2$  or $3$. In this case  \cite[Theorem 2.1]{hoV} (the \emph{slope inequality} for hyperelliptic fibrations) applies and gives $q=1$, a contradiction. 

If $K^2\geqslant 3\chi$, then   $2p_g=K^2\geqslant 3p_g-3q+3$ yields $p_g\leqslant 6$ for $q=3$ and $p_g\leqslant 3$   for  $q=2$.   Since $K^2=2p_g$, one has  $K^2\leqslant 12$, if $q=3$ and $K^2\leqslant 6$ if $q=2$.  But, by Arakelov's theorem (see \cite{Be}), we have   $K^2\geqslant 8(g-1)(q-1)\geqslant 8(q-1)$,
  a contradiction.

So $\albdim (S)=2$.  Then the Severi's inequality  proved in \cite{pa}  gives
$2p_g=K^2\geqslant  4\chi$, 
i.e., $p_g\le 2q-2$. Since $\chi\ge 2$ and $q\le 3$, the only possibility is $p_g=4$, $q=3$.
\end{proof}

\begin {claim}\label{cl:2} 
$S$ has no  irrational pencil  $f\colon S\to B$ with $B$ of genus $b>1$. Therefore:\\
\begin{inparaenum}[(i)]
\item   the map $\Phi\colon\bigwedge^2H^0(S, \Omega^1_S)\to H^0(S, \mathcal O_S(K_S))$ is injective;\\
\item  the Albanese image $\Xi$ of $S$ is not covered by elliptic curves.
\end{inparaenum}
\end{claim}

\begin{proof}[Proof of the Claim] 
 Let $g$ be the genus of the general fibre of $f$. By Arakelov's theorem (see \cite{Be}) we have $8=K^2\geqslant 8(g-1)(b-1)$, hence $g=b=2$.
By  \cite[Theorem 3] {hop}, one has $K^2\geqslant  2\chi-6+6b=10$, a contradiction. This proves the first assertion and (i) follows by the  Castelnuovo--de Franchis' theorem. 

To prove (ii), suppose  $\Xi$ is covered by elliptic curves. Then there is an elliptic curve $E\subset {\rm Alb}(S)$ such that
the image of $S$ via the composition of the Albanese morphism  and the morphism ${\rm Alb}(S)\to {\rm Alb}(S)/E$ is a curve $B$. Since $B$ has to span ${\rm Alb}(S)/E$, which has dimension 2, then $B$ has genus $b\geqslant 2$, contradicting the first assertion. \end{proof}

\begin {claim}\label{cl:3} 
Let $F$ be the general fibre of a pencil of $f: S\dasharrow B$ with $B$ of genus $b$ (possibly $b=0$ and the pencil is linear, with base points), and let $F$ be irreducible of  geometric genus $g$.
  Then: \\
  \begin{inparaenum}  [(i)]
  \item $K\cdot F\geqslant 4$ and $g\geqslant 3$;\\
  \item if $K\cdot F=4$, then either  $F^2=0,  g=3, b=1$ or $F^2=2, b=0, g=4$ and $K\sim 2F$.
 \end{inparaenum}
 
   In particular $S$ has no pencil of curves of genus 2 and  its bicanonical map is a birational morphism.
\end{claim}

\begin{proof}[Proof of the Claim] 
 By Claim  \ref {cl:2}, we have $b\leqslant 1$. By blowing up if necessary,  we can assume the pencil has no base points and $F$ is smooth of genus $g$. 
 Then, by \cite[Lemme on p. 345] {Be},  one has $3=q\leqslant b+g$ with equality only if  $S$ is birational to a product of curves of genus $b$ and $g$. Since $S$ is of general type  and $b\leqslant 1$, equality cannot hold and thus either $b=1$ and $g\geqslant 3$ or $b=0$ and $g\geqslant 4$.

Consider now again  the original surface $S$.

If  $K\cdot F<4$,  by the index theorem and  parity, one either has  $F^2=0, K\cdot F=2$ or $F^2=1, K\cdot F=3$. The former case implies $g=2$, a contradiction.
The latter case gives $g=3$ and $b=0$ because the pencil has a base point, a contradiction again.  Hence $K\cdot F\geqslant 4$. If equality holds, again by the index theorem and parity either $F^2=0$ or $F^2=2$ and $K_S\sim 2F$.  In the former case $g=3$ and thus $b=1$.  

The last assertion follows from  \cite{ccm}. \end{proof}

Finally  we prove  Proposition \ref{prop:no-chi2} by showing that  surfaces $S$ as in Claim \ref {cl:1} do not exist. Suppose otherwise.
Consider the map $\phi\colon S\dasharrow \pp^2$ associated with the linear system $|\Imm (\Phi)|$, (see Claim \ref {cl:2}, (i)). This is the composition of the Albanese map $a\colon  S\to \Xi$ with the   Gauss map $\gamma\colon \Xi \dasharrow \pp^2$. Since $\Xi$ is not covered by elliptic curves,  $\gamma$ is dominant, hence a fortiori the image of the 
canonical map $\fie\colon S\dasharrow \pp^3$ is  a surface $\Sigma$. 
 
 Consider the multiplication map 
 \[\mu\colon {\rm Sym}^2(H^0(S, \mathcal O_S(K)))\to H^0(S,\mathcal O_S(2K)).\] Since 
 \[\dim ({\rm Sym}^2(H^0(S, \mathcal O_S(K))))=h^0(S,\mathcal O_S(2K))=10,\] we have two possibilities:\\
  \begin{inparaenum}
 \item[(i)] $\mu$ is an isomorphism:  since the bicanonical map of $S$ is a birational morphism (see Claim \ref {cl:3}), then 
  $\fie$ is also a birational morphism;\\
  \item[(ii)] $\dim( \ker (\mu)) =1$: in this case $\Sigma$  is a quadric.
  \end{inparaenum} 
  
  Since $q$ is odd and $\albdim (S)=2$, the canonical system is contained in the main paracanonical system $\Kcal$  (see  \cite{paracanonical}).  Hence if the general canonical curve is irreducible (smooth) then the general curve in $\Kcal$ is also irreducible (smooth).

Assume we are in case (i). Let $C\in \Kcal$ be  general, which is smooth by the above remark.  For any paracanonical curve $D\in |2K-C|$ there exists a quadric  $Q_D$ of $\pp^3$ such that $D+C$ is the divisor of $\fie^*(Q_D)$.  Since $h^0(S, \mathcal O_S(D))=\chi=2$,   there exist at least two distinct quadrics $Q_1$ and $Q_2$ of $\pp^3$ containing $\fie(C)$. Since $\fie(C)$ is irreducible and non degenerate of degree 8, this is a contradiction. So  case (i) does not occur. 

Assume now we are in case (ii) and suppose first $\Sigma$ is non--singular. The two line rulings $|L_1|$, $|L_2|$ of $\Sigma$ determine two  pencils on $S$ with general fibres $F_1$, $F_2$. For $1\leqslant i \leqslant 2$ the strict transform of a general element of $\vert L_i\vert$ is numerically equivalent to $r_iF_i$, with $r_i\geqslant  1$, and $K-(r_1F_1+r_2F_2)$ is numerically equivalent to an effective divisor. Hence we have $8=K^2\ge r_1K\cdot F_1+r_2K\cdot F_2\ge 4(r_1+r_2)$ (the last inequality follows by Claim \ref {cl:3}). So $r_1=r_2=1$, $K\cdot F_1=K\cdot F_2=4$ and, by  Claim \ref {cl:3},  $|F_1\vert$ and $|F_2|$ are distinct linear pencils with $F_1^2=F_2^2=F_1\cdot F_2=2$ (in particular $F_1\sim F_2$ and $K\sim 2F_1\sim 2F_2$). The pencil $|F_i\vert$ has a base scheme $\beta_i$ of lenght 2, hence $F_i$ is smooth of genus 4, for $1\leqslant i \leqslant 2$.

Set $F=F_i$ (with $i=1$ or $2$). The restriction sequence
\[
0\to \mathcal O_S(K_S)\to O_S(K_S+F)\to O_F(K_F)\to 0
\]
and $h^1(S,\mathcal O_S(K_S+F))=0$ yield the long exact sequence in cohomology
$$0\to H^0(S,\mathcal O_S(K_S))\to H^0(S,\mathcal O_S(K_S+F))\buildrel  {r}\over \longrightarrow H^0(F,\mathcal O_F(K_F))\to H^1(S,\mathcal O_S(K_S))\to 0$$
This implies that $\dim({\rm Im}(r))=1$, i.e.,  the rational map determined by  $|K_S+F|$ contracts the general curve in $\vert F\vert$ to a point. This is a contradiction, because the canonical image of $S$ is a surface. 

Finally, assume  $\Sigma$ is a quadric cone. The same arguments as before show that the line ruling of $\Sigma$ determines a pencil $|F|$ on $S$ with $K\cdot F=4$, $F^2=2$. Then, as above,  the rational map determined by $|K_S+F|$ contracts the general curve of $\vert F\vert$ to a point, again a contradiction, which shows that case (ii)  cannot occur either. This finishes the proof of the Proposition. \end{proof}

\begin{rem} In order to exclude the existence of surfaces $S$ as in Claim \ref {cl:1} we could have argued, in principle,  as in the proof of Theorem \ref{thm:pgq2}, i.e. we could have used the results in \cite {new}. To do so, one should first prove that the general curve in $\Kcal_S$ is smooth. This is not shorter, actually it is a bit more involved, than the proof presented here.\end{rem}

     \end{document}